\newcommand\restr[2]{{
  \left.\kern-\nulldelimiterspace
  #1
  \vphantom{\big|} 
  \right|_{#2} 
  }}
\newtheorem{theorem}{Theorem}[section]
\newtheorem{lemma}[theorem]{Lemma}
\newtheorem{definition}[theorem]{Definition}
\newtheorem{proposition}[theorem]{Proposition}
\newtheorem*{thma}{Theorem 1.1}
\newtheorem*{corb}{Corollary 1.2}
\begin{document}
\title{Maximal Amenability with Asymptotic Orthogonality in Amalgamated Free Products}
\author{Brian Leary}
\address{Department of Mathematics, West Virginia University Institute of Technology, 410 Neville Street, Beckley, WV 25801}
\email{Brian.Leary1@mail.wvu.edu}
\begin{abstract} \noindent We investigate the use of Popa's asymptotic orthogonality to establish maximal amenability for amalgamated free product von Neumann algebras.  Although new techniques have recently been developed to consider amalgamated free products, we find that the technique developed by Popa in the 1980s can be used to demonstrate maximal amenability for a certain family of amalgamated free product von Neumann algebras.
\end{abstract}
\maketitle
\section{Introduction}
Our goal in this paper is to establish maximal amenability results in certain amalgamated free product von Neumann algebras by using Popa's asymptotic orthogonality method.

In the origins of the subject in the 1930s and 1940s, Murray and von Neumann gave the basic definitions of factoriality, type decomposition, and other isomorphism invariant properties in what are now known as von Neumann algebras, and they constructed the first examples.  One such construction was the approximately finite dimensional $\text{II}_1$ factor $\mathcal{R}$, which can be realized as the tensor product (with respect to the normalized trace) 
of countably infinitely many copies of the algebra of $2\times 2$ matrices over the complex numbers.  They were also able to prove that, up to isomorphism, $\mathcal{R}$ is the unique approximately finite dimensional factor of type $\text{II}_1$.  Moreover, they showed that every infinite dimensional factor contains a copy of $\mathcal{R}$.  Later, Connes \cite{C} was able to prove that the property of a factor $M\subset B(\mathcal{H})$ being approximately finite dimensional is equivalent to the amenability of $M$, which is the existence of an $M$-central state on $B(\mathcal{H})$ that extends the trace on $M$, and is also equivalent to the injectivity of $M$, which is the existence of a conditional expectation from $B(\mathcal{H})$ onto $M$.

In Kadison's 1967 list of problems on von Neumann algebras \cite{Kad}, he asked whether every self-adjoint operator in an arbitrary $\text{II}_1$ factor $M$ can be embedded into some approximately finite dimensional subfactor of $M$, or equivalently, whether every separable abelian von Neumann subalgebra of a $\text{II}_1$ factor $M$ could be embedded into some approximately finite dimensional subfactor of $M$.  In 1983, Popa \cite{P1} provided a negative answer to the problem by constructing an abelian subalgebra of a $II_1$ factor that is a maximal amenable subalgebra, and hence a maximal approximately finite dimensional subalgebra by Connes' theorem.  Specifically,  he proved that the abelian von Neumann subalgebra $M_a$ of $L(\mathbb{F}_n)$, generated by one of the $n$ canonical generators $a$ of $\mathbb{F}_n$, is maximal amenable in $L(\mathbb{F}_n)$.  His method involved the analysis of $M_a$-central sequences in $L(\mathbb{F}_n)$ through the ``asymptotic orthogonality property."  To be precise, he showed that for any free ultrafilter $\omega$ on $\mathbb{N}$, any $x\in M_a^\prime \cap (L(\mathbb{F}_n)^\omega \ominus M_a^\omega)$, and any $y_1, y_2\in L(\mathbb{F}_n)\ominus M_a$, we have that $y_1x$ and $xy_2$ are perpendicular in $L^2(L(\mathbb{F}_n)^\omega)$.  Then using the strong mixingness of $M_a$ inside $L(\mathbb{F}_n)$, he showed that $L(\mathbb{F}_n)^\prime\cap M_a$ had a non-zero atomic part, and from this he was able to conclude maximal amenability.

In the last decade, Popa's asymptotic orthogonality technique has been used to obtain several more results about maximal amenable subalgebras.  Cameron, Fang, Ravichandran, and White proved in \cite{CFRW} that the Laplacian masa in $L(\mathbb{F}_n)$ is maximal amenable. They were able to do this by modifying R\u{a}dulescu's proof of singularity of the Laplacian masa to show that it also had the asymptotic orthogonality property.  In 2010, Jolissaint \cite{Jo} gave conditions on a subgroup $H$ of $\Gamma$ that imply that $L(H)$ is a maximal amenable subalgebra of $L(\Gamma)$ by first establishing the asymptotic orthogonality property.  A special case of this showed that $L(H_1)$ is maximal amenable in the group von Neumann algebra of the amalgamated free product group $L(H_1 \ast_Z H_2)$, where $H_1$ is infinite and abelian and $Z$ is a finite common subgroup.  In 2013, Boutonnet and Carderi \cite{BC} expanded Jolissaint's work to show that for any infinite maximal amenable subgroup $H$ of a hyperbolic group $\Gamma$, $L(H)$ is maximal amenable inside $L(\Gamma)$.  Houdayer \cite{Houd3} proved in 2014, among other things, that any diffuse amenable von Neumann algebra can be realized as a maximal amenable subalgebra with expectation inside a full nonamenable type $\text{III}_1$ factor by defining a relative version of the asymptotic orthogonality property, which will be used in this paper.  Other maximal amenable von Neumann algebra results have been obtained by Brothier \cite{Br}, Fang \cite{F}, Gao \cite{Gao}, Ge \cite{Ge}, Hou \cite{Hou}, Houdayer \cite{Houd1}, Houdayer and Ueda \cite{HU}, Shen \cite{Shen}, and Str\u{a}til\u{a} and Zsid\'{o} \cite{SZ}.

More recently, Boutonnet and Carderi \cite{BC2} discovered an alternative technique for establishing maximal amenability in the specific cases of von Neumann algebras arising from groups, and their method was the first result that did not use Popa's asymptotic orthogonality. Boutonnet and Houdayer \cite{BH1}, \cite{BH2} have also used alternative techniques to obtain generalized results in maximal amenability in amalgamated free products.

In this paper, we will utilize Popa's method to obtain a criterion for maximal amenability in amalgamated free product von Neumann algebras.  That is, we will consider finite von Neumann algebras $(N_1, \tau_1)$ and $(N_2, \tau_2)$ with a sufficiently nice common von Neumann subalgebra $(B, \tau_B)$, and we let $M = N_1 \ast_B N_2$ denote their 
amalgamated free product von Neumann algebra.  We assume further that $N_1$ is diffuse and amenable, and that no corner of $N_1$ embeds into a corner of $B$ inside $N_1$, in the sense of Popa's intertwining by bimodules in \cite{P5}.  From this, we will be able to conclude maximal amenability of $N_1$ in $M$, and this is the main theorem of this work. 
The precise statement is as follows. 
\begin{thma} Suppose that $(N_1, \tau_1)$ and $(N_2, \tau_2)$ are finite von Neumann algebras with a common von Neumann subalgebra $(B, \tau_B)$ such that the inclusion $B\subset N_1$ admits a Pimsner-Popa basis of unitaries.  Suppose further that $N_1$ is diffuse and amenable, with the property that $N_1\not\prec_{N_1} B$.  Then $N_1$ is maximal amenable in $M=N_1\ast_B N_2$. \end{thma}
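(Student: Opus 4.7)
The plan is to follow Popa's asymptotic orthogonality blueprint, adapted to the amalgamated free product $M = N_1 \ast_B N_2$ via Houdayer's relative formulation, with the Pimsner--Popa basis of unitaries serving as the key combinatorial device. The argument splits into three stages: first, establishing a (relative) asymptotic orthogonality property (AOP) for the inclusion $N_1 \subseteq M$; second, using $N_1 \not\prec_{N_1} B$ to prove that every $N_1$-central element of $M^\omega$ already lies in $N_1^\omega$; third, combining these with amenability via the standard Popa-style argument to force any intermediate amenable $Q$ to equal $N_1$.

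For the AOP step, I would approximate any $y \in M \ominus N_1$ in $\|\cdot\|_2$ by finite sums of reduced words alternating between $N_1$ and $N_2 \ominus B$. The Pimsner--Popa basis of unitaries $\{u_\alpha\} \subseteq \mathcal{U}(N_1)$ realizes $N_1 = \bigoplus_\alpha u_\alpha B$ as a right $B$-module. Given reduced $y_1, y_2 \in M \ominus N_1$ and $x = (x_n)_\omega \in N_1' \cap (M^\omega \ominus N_1^\omega)$, the strategy is to compute $\langle y_1 x, x y_2 \rangle_\omega$ by inserting pairs $u_\alpha u_\alpha^*$, using $[x, u_\alpha] = 0$ to push the unitaries across $x$, and recognizing the resulting expression as a sum of pairings between reduced words whose $N_1$--$N_2$ alternation patterns no longer match in length. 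Those pairings vanish by the standard orthogonality relations in an amalgamated free product, yielding AOP.

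For the mixing step, Popa's intertwining theorem converts $N_1 \not\prec_{N_1} B$ into a net $\{w_\lambda\} \subseteq \mathcal{U}(N_1)$ with $\|E_B(a w_\lambda b)\|_2 \to 0$ for all $a, b \in N_1$. Any $N_1$-central $x \in M^\omega$ is preserved by conjugation with each $w_\lambda$; decomposing $x$ according to reduced-word length and passing to a limit along $\lambda$, the asymptotic vanishing of $E_B$ against the $w_\lambda$ propagates through every reduced word of positive length, collapsing those components and forcing $x \in N_1^\omega$.

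For the conclusion, suppose for contradiction that $Q$ is amenable with $N_1 \subsetneq Q \subseteq M$, and pick $y \in Q \ominus N_1$. Amenability of $Q$ together with the trace-preserving expectation $E_{N_1}^M : M \to N_1$ yields (via the standard Popa--Houdayer reduction) a nontrivial $Q$-central element $x \in Q' \cap M^\omega$; stage two places $x$ in $N_1^\omega$, and AOP applied to the nontrivial perpendicular component produced by refining the Popa construction then contradicts $yx = xy$. The principal technical obstacle I anticipate is stage one: in the unamalgamated case length-mismatched reduced words are automatically orthogonal, but in the amalgamated setting each reduced word carries a nontrivial $B$-bimodule structure and the orthogonality computation becomes delicate. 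This is precisely why the hypothesis posits a Pimsner--Popa basis \emph{of unitaries} rather than a general Pimsner--Popa basis: unitarity preserves $\|\cdot\|_2$-norms and makes the push-through across $x$ controllable, which is what allows the AOP computation to go through.
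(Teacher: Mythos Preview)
Your three-stage outline matches the paper's architecture (AOP, a mixing ingredient from $N_1\not\prec_{N_1}B$, then Houdayer's abstract theorem), but two of the stages have genuine problems as you describe them.

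\textbf{Stage 1.} The mechanism you propose for AOP---inserting $u_\alpha u_\alpha^*$, pushing one copy across $x$ via $[x,u_\alpha]=0$, and then appealing to a ``length mismatch'' of alternation patterns---does not work. Conjugating a reduced word by a unitary in $N_1$ does not change its alternation length; it only modifies the first and last $N_1$-letters. So no length mismatch is ever produced, and the pairing does not vanish for that reason. The paper's argument is different: fix the monomials $y_1,y_2$ and define a subspace $X_1\subset L^2(M)$ of reduced words whose first and last $N_1$-letters satisfy specific $E_B$-orthogonality conditions against the boundary letters of $y_1,y_2$; on $X_1$ the orthogonality $\langle (x_n)_{X_1}y_1,\,y_2(x_n)_{X_1}\rangle=0$ is then immediate from freeness. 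The Pimsner--Popa basis enters only to kill the complementary piece $(x_n)_{X_0}$: the key point is that $E_B(u_i^*u_j)=0$ for $i\neq j$ makes the conjugated subspaces $u_kX_0u_k^*$ mutually orthogonal, so summing $\|(x_n)_{u_kX_0u_k^*}\|_2^2$ over $m$ indices is bounded by $\|x_n\|_2^2$, while commutation with $x$ forces each summand close to $\|(x_n)_{X_0}\|_2^2$; dividing by $m$ gives $\|(x_n)_{X_0}\|_2\to 0$.

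\textbf{Stages 2--3.} These are logically inconsistent as written. If stage~2 really established $N_1'\cap M^\omega\subset N_1^\omega$, then the set $N_1'\cap(M^\omega\ominus N_1^\omega)$ in the AOP hypothesis would be $\{0\}$, making AOP vacuous and stage~1 irrelevant; yet your stage~3 still tries to apply AOP to a ``nontrivial perpendicular component''. The paper does \emph{not} prove $N_1'\cap M^\omega\subset N_1^\omega$. It proves the weaker statement that $N_1\subset M$ is weakly mixing through $N_1$ (a direct consequence of $N_1\not\prec_{N_1}B$ via Ioana--Peterson--Popa), and then invokes Houdayer's black-box Theorem~8.1: amenability of $P=N_1$, weak mixing of $N_1\subset M$ through $N_1$, and AOP relative to $N_1$ together force any amenable $Q\supseteq N_1$ into $N_1$. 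Your stage~3 sketch of how the hypertrace interacts with AOP and mixing is not how that theorem runs, and you should either reproduce Houdayer's argument or cite it.
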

To prove this, we will show that $N_1$ satisfies the relative version of the asymptotic orthogonality property in $M$ using the Pimsner-Popa basis, and that $N_1$ satisfies a relative weak mixing condition in $M$ using the intertwining by bimodules condition.
We also show that the intertwining by bimodules condition $N_1 \not\prec_{N_1} B$ can be translated into a condition on the Pimsner-Popa index of $B$ inside $N_1$, which gives the following corollary.
\begin{corb} Suppose that $(N_1, \tau_1)$ is a finite, diffuse, amenable von Neumann algebra and $(N_2,\tau_2)$ is a finite von Neumann algebra with a common von Neumann subalgebra $(B, \tau_B)$ such that the inclusion $B\subset N_1$ admits a Pimsner-Popa basis of unitaries.  Suppose further that for every non-zero projection $p\in B^\prime\cap N_1$, we have that $[pN_1p : Bp] = \infty$.  Then $N_1$ is maximal amenable in $M = N_1 \ast_B N_2$.
\end{corb}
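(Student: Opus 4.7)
The plan is to derive Corollary 1.2 directly from Theorem 1.1 by verifying that the Pimsner-Popa index hypothesis implies $N_1 \not\prec_{N_1} B$. Every other hypothesis of Theorem 1.1 is already assumed in Corollary 1.2 (finiteness, diffuseness and amenability of $N_1$, finiteness of $N_2$, and the existence of a Pimsner-Popa basis of unitaries for $B \subset N_1$), so the work is concentrated in this one translation.

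I would prove the contrapositive of the desired implication: if $N_1 \prec_{N_1} B$, then there exists a non-zero projection $p \in B' \cap N_1$ with $[pN_1p : Bp] < \infty$. Invoking Popa's characterization of intertwining from \cite{P5}, the assumption $N_1 \prec_{N_1} B$ yields non-zero projections $p_0 \in N_1$ and $q_0 \in B$, a non-zero partial isometry $v \in p_0 N_1 q_0$, and a normal unital $\ast$-homomorphism $\varphi: p_0 N_1 p_0 \to q_0 B q_0$ satisfying $xv = v\varphi(x)$ for all $x \in p_0 N_1 p_0$. The task is to convert this algebraic intertwining data into a genuine projection in $B' \cap N_1$ of finite Pimsner-Popa index.

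The natural framework is the basic construction $\langle N_1, e_B \rangle$, in which $N_1$-$B$ subbimodules of $L^2(N_1)$ correspond, via the modular conjugation $J$, to projections in $B' \cap N_1$, and finite generation as a right $B$-module matches finite Pimsner-Popa index on the corresponding cut-down. The intertwiner $v$ provides a non-zero $N_1$-$B$ subbimodule of $L^2(N_1)$ that is finitely generated as a right $B$-module, since $\overline{N_1 v B}$ is accommodated by the partial isomorphism $\varphi$ between corners. The resulting projection $p \in B' \cap N_1$, after restricting to a non-zero central summand if necessary, then witnesses $[pN_1p : Bp] < \infty$.

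The main obstacle will be the bookkeeping in this passage: the projection $vv^*$ lies in $p_0 N_1 p_0$ but not a priori in $B' \cap N_1$, so one must either extract the desired projection through the basic construction as above, or extend $v$ by orthogonal partial isometries to cover an appropriate central support inside $B' \cap N_1$. Here the assumed Pimsner-Popa basis of unitaries for $B \subset N_1$ is used: it makes $\langle N_1, e_B \rangle$ especially tractable and allows the index computations to be carried out explicitly. Once this equivalence is established, the hypothesis $[pN_1p : Bp] = \infty$ for every non-zero $p \in B' \cap N_1$ precisely negates the existence of the projection produced above, which gives $N_1 \not\prec_{N_1} B$, and Theorem 1.1 then yields maximal amenability of $N_1$ in $M$.
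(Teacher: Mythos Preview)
Your overall plan---reduce Corollary~1.2 to Theorem~1.1 by showing that the index hypothesis forces $N_1\not\prec_{N_1}B$---is exactly what the paper does; the paper packages this equivalence as Lemma~2.8. The technical routes differ, however. The paper proves $(1)\Rightarrow(2)$ directly (assume $N_1\not\prec_{N_1}B$, fix $p\in B'\cap N_1$, and use the Pimsner--Popa projection lemma inside $\langle N_1,e_{Bp}\rangle$ to produce a projection $q\leq p$ with $\|E_{Bp}(q)\|_2\leq\varepsilon\|q\|_2$, forcing $\lambda(pN_1p,Bp)=0$), whereas you propose the contrapositive, starting from intertwining data and manufacturing a finite-index corner via the bimodule picture. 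Your route is workable, but the step ``finite trace of $f_0\in N_1'\cap\langle N_1,e_B\rangle$ translates via $J$ to $[pN_1p:Bp]<\infty$'' is where the real content lies and needs more than the sentence you give it; the paper's own terse handling of the analogous step in its $(2)\Rightarrow(3)$ argument shows this is a known sticking point. One correction: the Pimsner--Popa basis of unitaries plays no role whatsoever in this reduction---Lemma~2.8 is stated and proved for an arbitrary subalgebra of a diffuse finite von Neumann algebra, and the unitary basis is used only later, in the proof of asymptotic orthogonality (Theorem~\ref{relAOP}). So you should drop that justification from your argument.
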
 
The structure of the paper is as follows: in the second section, we present the preliminary material needed for the proofs of the main results.  The third section begins with the construction of the amalgamated free product von Neumann algebra and relevant material on Pimsner-Popa index and bases.  We then prove in Theorem \ref{relAOP} that under our hypotheses, we have the relative asymptotic orthogonality property.  Then combined with the weak mixingness established in Lemma \ref{mix}, we are able to derive maximal amenability and prove Theorem 1.1.  In the fourth section, we give examples of how these hypotheses can be applied in different von Neumann algebra constructions.

\noindent \textbf{Acknowledgment.} I would like to thank Sorin Popa, Cyril Houdayer, and Ben Hayes for their useful comments, suggestions, and discussions regarding the development of this paper.

\section{Preliminaries}
\subsection{Amenability}
We first recall the definition of amenability for von Neumann algebras.
\begin{definition}A von Neumann algebra $M\subset B(\mathcal{H})$ is said to be \emph{amenable} if there exists a state $\Phi$ on $B(\mathcal{H})$ such that $\Phi(uxu^*) = \Phi(x)$ for all $x\in B(\mathcal{H})$ and all $u\in\mathcal{M}$ (that is, $\Phi$ is an $M$-central state on $B(\mathcal{H})$), and such that $\restr{\Phi}{M} = \tau$. We call $\Phi$ a \emph{hypertrace}.
\end{definition}

A von Neumann subalgebra $N$ of $M$ is said to be \emph{maximal amenable} if $N$ is amenable and $N$ is maximal with respect to inclusion among all amenable subalgebras of $M$.  We note that  maximal amenability is, in some sense, a strong type of singularity, as every maximal amenable von Neumann subalgebra of a finite von Neumann algebra is singular (see Lemma 3.6 in \cite{OP}, for example).
\subsection{Asymptotic Orthogonality}
Popa's method relies on the following property:
\begin{definition} A von Neumann subalgebra $N\subset M$ is said to have the \emph{asymptotic orthogonality property} if for any free ultrafilter $\omega$ on $\mathbb{N}$, for any $y_1, y_2\in M \ominus N$, and for any $x\in N^\prime \cap (M^\omega \ominus N^\omega)$, we have that $xy_1 \perp y_2 x$ in $L^2(M^\omega)$.\\
In particular, for any $x\in N^\prime\cap M^\omega$, $y_1, y_2\in M\ominus N$, 
\[
\left\| y_1x - xy_2\right\|_2^2 \geq \left\| y_1 (x-E_{N^\omega}(x))\right\|_2^2 + \left\| ( x-E_{N^\omega}(x))y_2\right\|_2^2.
\]
\end{definition}
Popa proved that any strongly mixing masa satisfying the asymptotic orthogonality property in a finite von Neumann algebra is necessarily maximal amenable. 
More specifically, he showed that if $a\in\mathbb{F}_n$ is one of the $n$ canonical generators, and $M_a$ denotes the abelian subalgebra generated in $L(\mathbb{F}_n)$ by the unitary $u_a$, then $M_a\in L(\mathbb{F}_n)$ has the asymptotic orthogonality property, which he proves in Lemma 2.1 in \cite{P1}.  

In \cite{Houd2}, Houdayer defined the following relative version of Popa's asymptotic orthogonality property:
\begin{definition} For von Neumann algebras $P\subset N \subset M$, we say that the inclusion $N\subset M$ has the \emph{asymptotic orthogonality property relative to $P$} if for any free ultrafilter $\omega$ on $\mathbb{N}$, for any $y_1, y_2\in M\ominus N$, and for any $x\in P^\prime \cap (M^\omega \ominus N^\omega)$, we have that $xy_1 \perp y_2x$ in $L^2(M^\omega)$.
\end{definition}
\subsection{Jones' Basic Construction}
If $Q\subset (M, \tau)$ is a von Neumann subalgebra of a finite von Neumann algebra, then we define the Jones projection $e_Q\in B(L^2(M))$ to be the orthogonal projection from $L^2(M)$ onto $L^2(Q)$.  If $E_Q: M \rightarrow Q$ is the unique trace-preserving faithful normal conditional expectation, then we have that $E_Q$ is implemented on $M$ by $e_Q$ via $e_Qxe_Q = E_Q(x)e_Q$ for all $x\in M$.  We define the basic construction $\left\langle M, e_Q\right\rangle$ to be the von Neumann subalgebra of $B(L^2(M))$ generated by $M$ and $e_Q$.  Then $\left\langle M, e_Q \right\rangle$ has a trace given by $\text{Tr}(xe_Qy) = \tau(xy)$ for any $x, y\in M$.  If we define $J\in B(L^2(M))$ by $x\hat{1} \mapsto x^*\hat{1}$ to be the canonical anti-unitary operator in $B(L^2(M))$, then we have that $\left\langle M, e_Q\right\rangle = JQ^\prime J \cap B(L^2(M))$.  For more details on the basic construction and properties, see \cite{JS}.
\subsection{Popa's Intertwining by Bimodules}
The following definition and theorem were given by Popa. (See Theorem 2.1 in \cite{P5} and the appendix in \cite{P6}.)
\begin{definition} Let $M$ be a finite von Neumann algebra and let $P, Q$ be von Neumann subalgebras of $M$.  We say that \emph{a corner of $P$ embeds into $Q$ inside $M$}, and write $P\prec_M Q$ if there exist non-zero projections $p\in P$ and $q\in Q$, a unital $\ast$-homomorphism $\Psi: pPp \rightarrow qQq$, and a partial isometry $v\in M$ such that $vv^*\in (pPp)^\prime\cap pMp$, $v^*v\in (\Psi(pPp))^\prime\cap qMq$, and such that $xv = v\Psi(x)$ for all $x\in pPp$, with $x=0$ whenever $xv = 0$.
\end{definition}
\begin{theorem}(Popa) For $M$ a finite von Neumann algebra with von Neumann subalgebras $P$ and $Q$, the following are equivalent:
\begin{enumerate}
\item $P\prec_M Q$.
\item There exists a Hilbert space $\mathcal{H}\subset L^2(M)$ such that $\mathcal{H}$ is a $P$-$Q$-bimodule with finite dimension as a right $Q$-module.
\item There exists a nonzero projection $f_0\in P^\prime \cap \left\langle M, e_Q \right\rangle$ with finite trace.
\item It is not true that: For any $a_1, a_2, \ldots, a_n\in M$ and any $\varepsilon>0$, there exists a unitary $u\in P$ such that $\left\|E_Q(a_iua_j^*)\right\|_2 < \varepsilon$ for each $i,j$.
\end{enumerate}
\end{theorem}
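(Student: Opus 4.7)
The plan is to prove the four equivalences via the cycle $(1) \Rightarrow (2) \Rightarrow (3) \Rightarrow (1)$, handling $(3) \Leftrightarrow (4)$ separately as the main technical step.

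For $(1) \Rightarrow (2)$, given the partial isometry $v$ and intertwining $\ast$-homomorphism $\Psi: pPp \to qQq$, I would take $\mathcal{H}$ to be the closed $P$-$Q$-subbimodule of $L^2(M)$ generated by $v\hat{1}$. The relation $xv = v\Psi(x)$ for $x \in pPp$ essentially reduces $\mathcal{H}$ to the cyclic right module $\overline{v \cdot qQq \cdot \hat{1}}$, which has right $qQq$-dimension one; adjusting for the corner projections produces finite right $Q$-dimension overall. For $(2) \Rightarrow (3)$, the orthogonal projection $f_0$ of $L^2(M)$ onto $\mathcal{H}$ commutes with the left $P$ action and with the right $Q$ action, so using the identification $\langle M, e_Q\rangle = JQ'J \cap B(L^2(M))$, it lies in $P' \cap \langle M, e_Q\rangle$; the finite right $Q$-dimension combined with the formula $\text{Tr}(xe_Qy) = \tau(xy)$ forces $\text{Tr}(f_0) < \infty$, by expressing $f_0$ as a finite sum of elements of the form $w_i e_Q w_i^*$ over a right $Q$-basis of $\mathcal{H}$. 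For $(3) \Rightarrow (1)$, I would start from a finite-trace projection $f_0$ and take the polar decomposition of a nonzero element of the form $f_0 a e_Q$ inside $f_0 \langle M, e_Q\rangle$; the isometric part yields a partial isometry $v \in M$, the support and range projections yield $p \in P$ and $q \in Q$, and $P$-centrality of $f_0$ provides the $\ast$-homomorphism $\Psi$.

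The main obstacle is $(3) \Leftrightarrow (4)$, which is the workhorse of the intertwining theory. For $\neg(4) \Rightarrow \neg(3)$, suppose $f_0 \in P' \cap \langle M, e_Q\rangle$ has finite trace and approximate $f_0$ in Hilbert--Schmidt norm with respect to $\text{Tr}$ by a finite sum $\sum a_i e_Q b_i^*$; for unitaries $u \in P$ use $P$-centrality to write $\text{Tr}(f_0 f_0^*) = \text{Tr}(u f_0 u^* f_0^*)$, and expand via the trace formula to bound this expression in terms of $\sum_{i,j}|\tau(u a_i E_Q(b_i^* u^* b_j) a_j^*)|$, which is controlled by the $\|E_Q(b_i^* u^* b_j)\|_2$. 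The hypothesis in $\neg(4)$ forces these to be arbitrarily small, so $f_0 = 0$. For $(3) \Rightarrow \neg(4)$, consider the weak-operator closure of $\text{co}\{u e_Q u^* : u \in \mathcal{U}(P)\}$ inside the positive part of $\langle M, e_Q\rangle$; by compactness of order intervals in the appropriate weak topology, this closure contains a $P$-central positive element $f$, and a spectral cut $\chi_{[c,\infty)}(f)$ for small $c>0$ produces a nonzero finite-trace projection in $P' \cap \langle M, e_Q\rangle$. The delicate point is showing $f \neq 0$: the failure of condition (4) provides a uniform lower bound of the form $\|E_Q(a_i u a_j^*)\|_2 \geq \varepsilon$ along any net of unitaries, which via the trace formula prevents the averaged limit from vanishing. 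Throughout, care must be taken with the semifinite trace on $\langle M, e_Q\rangle$ so that all approximations and spectral cuts respect finite-trace estimates.
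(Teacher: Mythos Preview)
The paper does not prove this theorem at all: it is stated as a result of Popa, with citations to \cite{P5} (Theorem~2.1) and the appendix of \cite{P6}, and no proof is given in the text. So there is no ``paper's own proof'' to compare your proposal against; you are attempting to reconstruct an argument that the author deliberately outsourced to the literature.

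That said, a couple of points in your sketch deserve attention. First, your labeling in the last paragraph is tangled: having already argued $\neg(4)\Rightarrow\neg(3)$, what remains for the equivalence is $(4)\Rightarrow(3)$, not ``$(3)\Rightarrow\neg(4)$'' as you wrote; and when you say ``the failure of condition (4) provides a uniform lower bound,'' you actually mean that condition (4) itself (which is already phrased as a negation) provides that bound. Second, and more substantively, the averaging you propose over $\mathrm{co}\{u e_Q u^* : u\in\mathcal{U}(P)\}$ does not see the elements $a_1,\ldots,a_n$ from (4), so the lower bound $\|E_Q(a_i u a_j^*)\|_2\geq\varepsilon$ cannot by itself prevent that particular convex closure from containing $0$. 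The standard fix is to average the finite-trace positive element $d=\sum_i a_i^* e_Q a_i$ instead: then $\mathrm{Tr}(u d u^* d)=\sum_{i,j}\|E_Q(a_i u a_j^*)\|_2^2\geq\varepsilon^2$ for every $u\in\mathcal{U}(P)$, which genuinely bounds the $\|\cdot\|_{2,\mathrm{Tr}}$-minimal element of the closed convex hull away from zero and yields the desired nonzero $P$-central finite-trace element.
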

\subsection{Pimsner-Popa Index and Bases}
We recall some definitions and facts from \cite{PiPo1}.  The Pimsner-Popa index for an inclusion of finite von Neumann algebras was first considered as a generalization of the Jones index for an inclusion of subfactors. 
\begin{definition}
For an inclusion of finite von Neumann algebras $P\subset N\subset M$, we define the constant $\lambda(N, P) = \max\{\lambda\geq 0 : E_{P}(x)\geq \lambda x \text{ for all } x\in N \text{ with } x>0\}$.\\
  Then the index can be defined by $[N : P]^{-1} = \lambda(N, P)$.
\end{definition}
We will need to consider the case when the Pimsner-Popa index is infinite, and we will use the following lemma from \cite{PiPo1}.
\begin{lemma}  Let $M$ be a von Neumann algebra with a faithful, normal, semifinite trace $\tau$, and suppose that $N\subset M$ is a von Neumann subalgebra such that $N^\prime\cap M$ contains no finite trace projections of $M$.  Then for all $\varepsilon > 0$ and all $x\in M$ with $\tau(x) <\infty$, there exist projections $q_1, q_2, \ldots, q_n\in N$ such that $\sum_{i=1}^n q_1 = 1$ and $\left\| \sum q_i x q_i \right\|_2 < \varepsilon \left\| x \right\|_2$.
\end{lemma}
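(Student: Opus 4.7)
The plan is a standard convex-compactness argument in $L^2(M,\tau)$, with the final step converting a unitary convex combination into the required partition-of-unity expression.

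Since $x\in M$ with $\tau(|x|)<\infty$, we have in particular $\tau(x^*x)<\infty$, so we may view $x\in L^2(M,\tau)$. Form the $\|\cdot\|_2$-closed convex hull
\[
C := \overline{\mathrm{co}}^{\,\|\cdot\|_2}\bigl\{uxu^*:u\in\mathcal{U}(N)\bigr\}\subset L^2(M,\tau).
\]
As a closed convex subset of a Hilbert space, $C$ contains a unique element $y_0$ of minimum $L^2$-norm. For every $v\in\mathcal{U}(N)$ the map $z\mapsto vzv^*$ is an $L^2$-isometry carrying $C$ to itself, so uniqueness forces $vy_0v^*=y_0$; thus the (closed, possibly unbounded) operator affiliated to $M$ determined by $y_0\in L^2(M,\tau)$ commutes with every unitary of $N$, and all of its spectral projections lie in $N'\cap M$.

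If $y_0\neq 0$, fix $\delta>0$ small enough that $p:=\mathbf{1}_{[\delta,\infty)}(|y_0|)\neq 0$. Then $p\in N'\cap M$ and
\[
\tau(p)\le\delta^{-2}\tau(|y_0|^2)=\delta^{-2}\|y_0\|_2^2<\infty,
\]
contradicting the hypothesis. Hence $y_0=0\in C$, and for any $\varepsilon>0$ there exist $\lambda_i>0$ summing to $1$ and unitaries $u_i\in\mathcal{U}(N)$ with
\[
\Bigl\|\sum_i\lambda_iu_ixu_i^*\Bigr\|_2<\tfrac{\varepsilon}{2}\|x\|_2.
\]

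It remains to replace the convex combination of conjugates by an expression $\sum_j q_j x q_j$ for some partition of unity $\{q_j\}\subset N$ by projections. The hypothesis, applied to $Z(N)\subset N'\cap M$, implies that $N$ has no minimal central projections of finite trace, so $N$ is rich in projections of arbitrarily small trace. Using this abundance, each $u_i$ can be approximated in $L^2$ by a step unitary $\sum_j\zeta_j^{(i)}q_j$ supported on a common refining partition $\{q_j\}\subset N$; the identity $\sum_jq_jzq_j=\frac{1}{n}\sum_{\omega^n=1}w_\omega z w_\omega^*$ with $w_\omega=\sum_j\omega^jq_j$ shows that such partition sums are themselves convex combinations of unitary conjugates, so that a sufficiently fine partition transfers the $L^2$-estimate on $\sum_i\lambda_iu_ixu_i^*$ to one on $\sum_jq_jxq_j$, yielding $\|\sum_jq_jxq_j\|_2<\varepsilon\|x\|_2$. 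This final conversion step is the main obstacle: the preceding Hilbert-space geometry is routine, but turning an abstract convex-hull approximation into a partition of unity with the required estimate demands carefully exploiting the projection structure inside $N$ that the hypothesis implicitly provides.
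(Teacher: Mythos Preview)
The paper does not prove this lemma; it quotes it from Pimsner--Popa \cite{PiPo1} and uses it as a black box. So there is no in-paper argument to compare against, and I assess your proof on its own.

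The convex-hull portion is correct and standard: the unique $\|\cdot\|_2$-minimizer $y_0$ in $\overline{\mathrm{co}}\{uxu^*:u\in\mathcal U(N)\}$ is fixed under conjugation by $\mathcal U(N)$, hence affiliated with $N'\cap M$; any nonzero spectral projection of $|y_0|$ would be a finite-trace projection in $N'\cap M$, so $y_0=0$ and some finite convex combination $\sum_i\lambda_iu_ixu_i^*$ is $L^2$-small. The gap is exactly where you flag it. First, the assertion that finitely many unitaries $u_1,\dots,u_m\in N$ can be simultaneously approximated by step unitaries on a \emph{common} partition $\{q_j\}\subset N$ is unjustified and in general false: each $u_i$ has its own spectral resolution, and unless the $u_i$ commute there is no joint refinement inside $N$. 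Second, the identity $\sum_jq_jzq_j=\tfrac1n\sum_{\omega^n=1}w_\omega zw_\omega^*$ only shows that pinchings lie \emph{inside} $\mathrm{co}\{uzu^*\}$; it points the wrong way for transferring a bound on a given convex combination back to a pinching. The observation about $Z(N)$ having no finite-trace central projections is correct but does not supply the missing step.

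The route that actually works avoids unitary conjugation altogether and runs the minimization directly over the semigroup generated by pinchings $\Phi_Q(z)=\sum_jq_jzq_j$. Each such map is a trace-preserving $L^2$-contraction, and post-composing a near-minimizer $\Phi_n(x)$ with $\Phi_{\{p,1-p\}}$ for any projection $p\in N$ cannot push the norm below the infimum; this forces $\|p\Phi_n(x)(1-p)\|_2\to 0$, so any weak cluster point is $N$-central and one concludes as before. Working with pinchings from the start is what sidesteps the conversion problem you ran into.
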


We then use this lemma to establish the following result.
\begin{lemma} Suppose that $M$ is a diffuse amenable finite von Neumann algebra, and $N\subset M$ is a von Neumann subalgebra.  The following are equivalent:
\begin{enumerate}
\item $M \not\prec_M N$.
\item $[pMp : Np] = \infty$ for every non-zero projection $p\in N^\prime\cap M$.
\item There exist unitaries $u_n\in \mathcal{U}(M)$ such that $\left\| E_N(u_n x) \right\|_2 \rightarrow 0$ for every $x\in N$.
\end{enumerate}
\end{lemma}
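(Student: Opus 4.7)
My plan is to prove the three-way equivalence by separately establishing $(1) \Leftrightarrow (3)$ using Popa's theorem part~$(4)$, and $(1) \Leftrightarrow (2)$ via the Jones basic construction together with Popa's theorem part~$(3)$. The diffuse amenable hypothesis on $M$ enters through separability (for a diagonalization) and through supplying enough projections for an averaging step that connects $(2)$ directly to $(3)$.

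For $(1) \Leftrightarrow (3)$, I would apply Popa's theorem part~$(4)$ with $P = M$ and $Q = N$: condition~(1) is equivalent to the assertion that for every $\varepsilon > 0$ and every finite set $a_1,\ldots,a_k \in M$, there exists a unitary $u \in \mathcal{U}(M)$ with $\|E_N(a_i u a_j^*)\|_2 < \varepsilon$ for all $i, j$. A standard diagonal argument over a $\|\cdot\|_2$-dense countable family in $M$ (available by separability) then extracts a single sequence $u_n \in \mathcal{U}(M)$ realizing $\|E_N(a u_n b)\|_2 \to 0$ for all $a, b \in M$, which specializes to $(3)$. The converse is immediate by plugging the sequence back into the finite $\varepsilon$-test.

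For $(1) \Leftrightarrow (2)$, I would invoke Popa's theorem part~$(3)$: $M \prec_M N$ iff $M' \cap \langle M, e_N \rangle$ contains a non-zero projection of finite $\mathrm{Tr}$-trace. The standard-form identifications $\langle M, e_N \rangle = JN'J$ and $M' = JMJ$ give a $*$-isomorphism $M' \cap \langle M, e_N\rangle \cong N' \cap M$ via $p \mapsto JpJ$, so it suffices to match finite-trace projections $JpJ$ with non-zero projections $p \in N'\cap M$ having $[pMp : Np] < \infty$. The main computation is the identity
\[
\mathrm{Tr}(JpJ) = \frac{[pMp : Np]}{\tau(p)},
\]
which I would derive by choosing a Pimsner-Popa basis $\{m_i\}$ for $pMp$ over $Np$ and applying the defining identity $\mathrm{Tr}(xe_N y) = \tau(xy)$ of the basic-construction trace. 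In particular $\mathrm{Tr}(JpJ) < \infty$ iff $[pMp : Np] < \infty$; negating across all non-zero $p \in N' \cap M$ yields $(1) \Leftrightarrow (2)$.

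The principal obstacle I anticipate is the trace identity above. A complementary route from $(2)$ to $(3)$ that bypasses it uses Lemma~2.7 applied in $(\langle M, e_N \rangle, \mathrm{Tr})$ with $M$ itself as the subalgebra, so its relative commutant is $M' \cap \langle M, e_N\rangle$. Once $(2)$ is used to rule out finite-$\mathrm{Tr}$-projections there (which requires only one direction of the formula), Lemma~2.7 applied to $x = e_N$ produces projections $q_1,\ldots,q_k \in M$ with $\sum q_i = 1$ and $\|\sum q_i e_N q_i\|_{\mathrm{Tr},2}$ arbitrarily small. Unpacking this via the identity $\mathrm{Tr}(q_i e_N q_i e_N) = \|E_N(q_i)\|_2^2$ yields $\sum_i \|E_N(q_i)\|_2^2$ arbitrarily small, and a random-phase unitary $u = \sum_i e^{i\theta_i} q_i \in \mathcal{U}(M)$ (averaging over i.i.d.\ uniform $\theta_i$ and picking a good realization) then delivers the expectation-vanishing sequence required by $(3)$.
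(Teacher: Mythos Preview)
The genuine gap is your trace identity $\mathrm{Tr}(JpJ)=[pMp:Np]/\tau(p)$. In this paper $[pMp:Np]$ means the Pimsner--Popa index $\lambda(pMp,Np)^{-1}$, which for non-factor inclusions is \emph{not} the coupling constant that a Pimsner--Popa basis computation of $\mathrm{Tr}(JpJ)$ produces. Already for $N=\mathbb{C}\subset M=\mathbb{C}\oplus\mathbb{C}$ with trace weights $(t,1-t)$ and $p=1$ one has $\mathrm{Tr}(1)=2$ (since $e_N$ is a rank-one projection in $B(\mathbb{C}^2)$ and $\mathrm{Tr}(e_N)=1$), while $[M:N]=\min(t,1-t)^{-1}\neq 2$ whenever $t\neq 1/2$. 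So the formula fails, and with it your clean biconditional $(1)\Leftrightarrow(2)$.

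The paper takes a route that never needs this identity and is structured as a cycle $1\Rightarrow 2\Rightarrow 3\Rightarrow 1$. For $(1)\Rightarrow(2)$ it works straight from the definition of $\lambda$: Popa's criterion~(3) shows $(pMp)'\cap\langle M,e_{Np}\rangle$ has no nonzero finite-trace projection, so Lemma~2.7 applied to $x=e_{Np}$ yields projections $q_i\in pMp$ with $\sum_i\|E_{Np}(q_i)\|_2^2$ arbitrarily small, and then some single $q_i$ satisfies $\|E_{Np}(q_i)\|_2\le\varepsilon\|q_i\|_2$, directly witnessing $\lambda(pMp,Np)=0$. For $(2)\Rightarrow(3)$ the paper uses an idea you do not consider: failure of~(3) is converted, via a convexity argument, into a nonzero $M$-central vector in $L^2(M)\otimes_N L^2(M)\cong L^2\langle M,e_N\rangle$, whose finite-$\mathrm{Tr}$ spectral projection lies in $J(N'\cap M)J$ and supplies a $p$ with $[pMp:Np]<\infty$. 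Your alternative route via Lemma~2.7 plus a random-phase unitary is in the right spirit---it is essentially the paper's $(1)\Rightarrow(2)$ argument repurposed to manufacture a unitary---but it still needs the implication $\mathrm{Tr}(JpJ)<\infty\Rightarrow[pMp:Np]<\infty$ to pass from~(2) to the hypothesis of Lemma~2.7, so it does not truly bypass the index/trace link; that one direction is correct, but it requires justification beyond what you have sketched.
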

\begin{proof} ($1\Rightarrow 2$) (c.f. Lemma 2.4 in \cite{CIK}, Lemma 1.4 in \cite{I1}).  Suppose that $M\not\prec_M N$ and let $p\in N^\prime\cap M$ be a non-zero projection.  It follows that $pMp\not\prec_M Np$.  Then by the intertwining by bimodules theorem, we have that $(pMp)^\prime \cap \left\langle M, e_{Np}\right\rangle$ does not contain a non-trivial projection of finite trace.  Thus, by the above lemma from Pimsner-Popa, we have that for any $\varepsilon > 0$, there exist projections $q_1, q_2\ldots q_n \in pMp$ such that $\sum_{i=1}^n q_i = 1$ and $\left\| \sum_{i=1}^n q_i e_{Np} q_i\right\|_{2,\text{Tr}} < \varepsilon$.  But then $\left\| \sum_{i=1}^n q_i e_{Np} q_i\right\|_{2,\text{Tr}}^2 = \sum_{i=1}^n \left\| E_{Np}(q_i)\right\|_2^2$, so there exists some $q = q_i$ with $\left\| E_{Np}(q)\right\|_2 \leq \varepsilon \left\| q\right\|_2$.  Thus, $[pMp : Np] = \infty$.\\
\\
($2\Rightarrow 3$)  Let $x_1, \ldots x_k\in N$ and $\varepsilon >0$.  We define $\xi = \sum x_j \otimes x_j^{\circ}\in L^2(M)\otimes_N L^2(M)$.  Thus, for any $u\in \mathcal{U}(M)$, we have $\left\langle u^* \xi u, 1\otimes 1\right\rangle = \sum_j \left\| E_N(ux_j)\right\|_2^2$.  But if $\sum \left\| E_N(ux_i)\right\|_2^2 \geq \varepsilon$, there exists $\xi\in L^2(M)\otimes_N L^2(M)$ that commutes with $M$ with $\xi\neq 0$.  But $L^2(M)\otimes_N L^2(M) \cong L^2(\left\langle M, e_N\right\rangle)$, and $\left\langle M, e_N\right\rangle \cap M^\prime = \rho(M\cap N^\prime)$, so by letting $p$ be a spectral projection of $\left| \xi \right|$ with finite trace, we have that $p\in N^\prime \cap M$ with $[pMp:Np] <\infty$.\\
\\
($3\Rightarrow 1$) This follows from the intertwining by bimodules theorem.
\end{proof}

Also from \cite{PiPo1}, we are concerned with an inclusion $N\subset M$ of finite von Neumann algebras with the following property:
\begin{definition} Suppose $N\subset M$ are finite von Neumann algebras and suppose there exists a family $\{m_j\}_{j\in J}$ of elements in $M$ such that:\\
(a) $E_N(m_j^*m_k) = 0$ if $j\neq k$,\\
(b) $E_N(m_j^*m_j)$ is a projection in $N$, and\\
(c) $L^2(M, \tau) = \bigoplus_j m_jL^2(N, \tau)$, and every $x\in M$ has a decomposition $x=\sum_j m_jE_N(m_j^*x)$.\\
Then the family $\{m_j\}_{j\in J}$ is called a \emph{Pimsner-Popa basis of $M$ over $N$}.
\end{definition}
This property was introduced and studied by Pimsner and Popa in \cite{PiPo1}, and they showed that the index cannot be greater than the size of the Pimsner-Popa basis.  Thus, we shall consider inclusions with infinite Pimsner-Popa bases.  Moreover, we will be focusing on the case when the Pimsner-Popa basis consists of unitary elements of $M$, which was studied by Ceccherini-Silberstein in \cite{CS}, where he introduced the terminology that the inclusion $N\subset M$ has the \emph{$U$-property} if there exists a Pimsner-Popa basis of $M$ over $N$ consisting of unitaries.

\section{Main Result}
\subsection{Amalgamated Free Products}  We now recall the construction of amalgamated free products as introduced in \cite{V} and \cite{P2}.  Suppose that $(N_1, \tau_1)$ and $(N_2, \tau_2)$ are finite von Neumann algebras with a common von Neumann subalgebra $(B, \tau_B)$, where $\restr{\tau_j}{B} = \tau_B$ for each $j$, so there exists a trace preserving conditional expectation $E_j: N_j \rightarrow B$.  We define the \emph{free product of $N_1$ and $N_2$ with amalgamation over $B$}, denoted $(M, E_B) = (N_1, E_1) \ast_B (N_2, E_2) = N_1 \ast_B N_2$, as follows:\\
\medskip
$N_1 \ast_B N_2$ has a dense *-subalgebra
\begin{equation*}
B \oplus \bigoplus_{n\geq  1} \bigoplus_{\substack{i_j\in\{1,2\}\\
		i_1\neq i_2, i_2\neq i_3, \ldots, i_{n-1}\neq i_n}} \text{sp}(N_{i_1}\ominus B)(N_{i_2}\ominus B) \cdots (N_{i_n} \ominus B)
\end{equation*}
with $E_B: M\rightarrow B$ satisfying $\restr{E_B}{N_j} = E_j$ and $E_B(x) = 0$ for any word $x = x_{i_1}x_{i_2}\ldots x_{i_n}$ with $x_{i_k}\in N_{i_k} \ominus B$, $i_1\neq i_2 \neq \ldots \neq i_n$.

$M$ has a trace given by $\tau = \tau_B \circ E_B$, and the subspaces $B$ and $\text{sp}(N_{i_1}\ominus B)\cdots (N_{i_n} \ominus B)$ are all mutually orthogonal in $L^2(M, \tau)$ with respect to the inner product $\left\langle x, y \right\rangle = \tau(y^*x)$, and the closures in $L^2(M, \tau)$ of these subspaces give mutually orthogonal Hilbert $B$-$B$-bimodules that sum to $L^2(M, \tau)$:
\[
L^2(\text{sp}(N_{i_1}\ominus B)\cdots (N_{i_n}\ominus B)) \cong \mathcal{H}_{i_1}^0 \otimes_B \mathcal{H}_{i_2}^0 \otimes_B \cdots \otimes_B \mathcal{H}_{i_n}^0,
\]
with $\mathcal{H}_i^0 = L^2(N_i) \ominus L^2(B)$.
\subsection{Maximal Amenability}
In this section, we will establish the results about amalgamated free products needed to prove the main theorem.

As far back as \cite{P4}, mixing properties were used to study subalgebras of $\text{II}_1$ factors. We recall the following definition as stated in \cite{PV}.
\begin{definition} For finite von Neumann algebras $P\subset N \subset M$, we say that the inclusion $N\subset M$ is \emph{weakly mixing through $P$} if there exists a sequence $u_n \in \mathcal{U}(P)$ such that
\[
\left\| E_N (xu_ny)\right\|_2 \rightarrow 0 \text{ for all } x,y\in M\ominus N.
\]
\end{definition}
The intertwining by bimodules condition ensures this weak mixingness in amalgamated free products, as can be seen from the following result in \cite{IPP}, which is contained in their proof of Theorem 1.1.
\begin{lemma}\label{mix} Suppose $(N_1, \tau_1)$ and $(N_2, \tau_2)$ are finite von Neumann algebras with a common subalgebra $B$ such that $\restr{\tau_1}{B} = \restr{\tau_2}{B}$.  Let $M =N_1\ast_B N_2$, and suppose that $P\subset N_1$ is a von Neumann subalgebra such that $P\not\prec_{N_1} B$.  Then for each $k=1,2$, $N_k\subset M$ is weakly mixing through $P$. 
\end{lemma}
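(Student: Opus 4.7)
The plan is to combine Popa's intertwining-by-bimodules theorem, applied to the hypothesis $P \not\prec_{N_1} B$, with the reduced-word structure of the amalgamated free product. The theorem gives, for every finite subset $F \subset N_1$ and every $\varepsilon > 0$, a unitary $u \in \mathcal{U}(P)$ satisfying $\|E_B(aub^*)\|_2 < \varepsilon$ for all $a, b \in F$. Fixing a countable, self-adjoint, $\|\cdot\|_2$-dense subset of the unit ball of $N_1$ and diagonalizing --- using $\|E_B(awb) - E_B(a'wb')\|_2 \leq \|a - a'\|_2 \|b\|_\infty + \|a'\|_\infty \|b - b'\|_2$ for contractions $w$ --- I obtain a single sequence $(u_n) \subset \mathcal{U}(P)$ with $\|E_B(a u_n b)\|_2 \to 0$ for every $a, b \in N_1$.

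By linearity and $L^2$-density it then suffices to show $\|E_{N_k}(x u_n y)\|_2 \to 0$ for reduced words $x, y \in M \ominus N_k$. Since $E_{N_k}$ is an $N_k$-bimodule map, any leading letter of $x$ (respectively trailing letter of $y$) lying in $N_k \ominus B$ can be pulled out of $E_{N_k}$ at the cost of an operator-norm constant, so we may assume that the letters $x_m$ and $y_1$ of $x$ and $y$ immediately adjacent to $u_n$ both lie in $N_{3-k} \ominus B$. Writing $u_n = E_B(u_n) + (u_n - E_B(u_n))$ with $u_n - E_B(u_n) \in N_1 \ominus B$ and expanding $x u_n y$, the central product $x_m u_n y_1$ requires either one ($k = 2$) or two ($k = 1$) iterated $B$-decompositions $z \mapsto E_B(z) + (z - E_B(z))$ before becoming a sum of reduced words. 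In every branch the result is either a reduced word lying in $M \ominus N_k$ (and therefore killed by $E_{N_k}$) or a single residual contribution of the form $x_1 \cdots x_{m-1} \cdot E_B(a u_n b) \cdot y_2 \cdots y_p$ for certain $a, b \in N_1$ built from $x_m$ and $y_1$. Using $\|czd\|_2 \leq \|c\|_\infty \|d\|_\infty \|z\|_2$, the $L^2$-norm of such a residual is bounded by $\|E_B(a u_n b)\|_2$ times operator-norm constants from the outer letters of $x$ and $y$, and thus tends to $0$ by the preceding step.

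The main obstacle is the bookkeeping in this structural computation: the number of iterated $B$-decompositions needed and the precise form of the elements $a, b \in N_1$ depend on $k \in \{1, 2\}$ and on which of $N_1$ or $N_2$ the letters adjacent to $u_n$ (and, after each $B$-decomposition, their new neighbours) belong to. The number of sub-cases is finite, however, and in each one the only term surviving $E_{N_k}$ is proportional to $\|E_B(a u_n b)\|_2$ with $a, b \in N_1$, which is precisely the quantity controlled in the first step.
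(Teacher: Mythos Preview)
The paper does not actually prove this lemma: it records that the statement is contained in the proof of Theorem~1.1 of Ioana--Peterson--Popa \cite{IPP} and moves on. Your proposal is essentially a reconstruction of that argument, and the overall strategy---extract a sequence $(u_n)\subset\mathcal U(P)$ with $\|E_B(au_nb)\|_2\to 0$ for all $a,b\in N_1$ from the negation of $P\prec_{N_1}B$, then reduce $E_{N_k}(xu_ny)$ to such quantities via the reduced-word calculus---is correct.

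One step is muddled, however. You invoke the $N_k$-bimodule property of $E_{N_k}$ to strip a \emph{leading} letter of $x$ and a \emph{trailing} letter of $y$, and conclude from this that the letters $x_m,\,y_1$ \emph{adjacent to $u_n$} may be taken in $N_{3-k}\ominus B$. These are different letters: the bimodule property only lets you peel off the outermost letters $x_1$ and $y_p$; it says nothing about $x_m$ or $y_1$. The correct manoeuvre on the inner side is different and does not use the bimodule property at all: since $u_n\in N_1$, absorb any adjacent $N_1\ominus B$ letter into it, replacing $u_n$ by $a u_n b$ with $a,b\in\{1,x_m,y_1\}\cap N_1$; this is harmless because your first step already controls $\|E_B(a'u_nb')\|_2$ for \emph{arbitrary} $a',b'\in N_1$. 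After this absorption the neighbouring letters on both sides lie in $N_2\ominus B$ (or that side is empty, which can occur only when $k=2$), independently of $k$. A single $B$-decomposition of the central $N_1$-block then suffices: the $(N_1\ominus B)$-part yields a reduced word orthogonal to $N_k$, and the $B$-part has $L^2$-norm dominated by $\|E_B(au_nb)\|_2$ times operator-norm constants. In particular the ``one ($k=2$) / two ($k=1$)'' count is an artefact of the faulty reduction; with the absorption trick, one decomposition is enough in every case.
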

In order to get maximal amenability, we first show that $N_1\subset M$ has the asymptotic orthogonality property, using in part a technique inspired by \cite{Houd1} and \cite{HR}.
\begin{theorem}\label{relAOP} Suppose $(N_1, \tau_1)$ and $(N_2, \tau_2)$ are finite von Neumann algebras with a common subalgebra $B$ such that $\restr{\tau_1}{B} = \restr{\tau_2}{B}$.  Suppose further that $B\subset N_1$ admits a Pimsner-Popa basis of unitaries $\{u_i\}_{i\in I}$ . Let $M =N_1\ast_B N_2$.  Let $\omega$ be a free ultrafilter on $\mathbb{N}$, $y_1, y_2\in M \ominus N_1$, and $x\in N_1^\prime \cap (M^\omega \ominus N_1^\omega)$. Then $xy_1 \perp y_2x$ in $L^2(M^\omega, \tau_\omega)$.
\end{theorem}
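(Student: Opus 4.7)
The plan is to combine Popa's asymptotic commutation trick with the Pimsner-Popa basis orthogonality to show $\tau_\omega(x^* y_2^* x y_1) = 0$. I fix a bounded representing sequence $(x_n)_n$ of $x$ with $E_{N_1}(x_n) = 0$ and $\|[a, x_n]\|_2 \to 0$ along $\omega$ for every $a \in N_1$, so the goal becomes $\lim_{n \to \omega} \tau(x_n^* y_2^* x_n y_1) = 0$. By bilinearity and $L^2$-density, I reduce to the case where $y_1, y_2$ are reduced words $c_1 c_2 \cdots c_k$ in the amalgamated free product, with $c_j \in N_{i_j} \ominus B$ alternating; since $y_i \in M \ominus N_1$, each such word either has $k \geq 2$ or is a single letter in $N_2 \ominus B$. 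The asymptotic commutation then lets me peel off a leading $N_1 \ominus B$-letter of $y_1$ via $\langle x y_1, y_2 x\rangle = \langle a x y_1', y_2 x\rangle = \langle x y_1', a^* y_2 x\rangle$ whenever $y_1 = a y_1'$; iterating, and symmetrically treating trailing letters of $y_2$, I may assume $y_1$ starts with a letter in $N_2 \ominus B$ and $y_2$ ends with a letter in $N_2 \ominus B$.

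Next I bring in the Pimsner-Popa basis of unitaries $\{u_i\}_{i \in I}$ for $B \subset N_1$, which satisfies the Parseval-type identity $\sum_i \|E_B(u_i^* z)\|_2^2 = \|E_{N_1}(z)\|_2^2$ for all $z \in M$ and the orthogonality $E_B(u_j^* u_i) = \delta_{ij}$. For each $i$, since $\|[u_i, x_n]\|_2 \to 0$ along $\omega$, the identity
\[
\tau(x_n^* y_2^* x_n y_1) = \tau(x_n^* (y_2^* u_i^*) x_n (u_i y_1)) + \varepsilon_n(i)
\]
holds with $\varepsilon_n(i) \to 0$ along $\omega$. The rewritten integrand $(y_2^* u_i^*) x_n (u_i y_1)$ now features reduced words $u_i y_1$ and $y_2^* u_i^*$ that are one letter longer than $y_1$ and $y_2^*$ respectively, with new initial (respectively terminal) letters in $N_1 \ominus B$ when $i \neq 0$.

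The heart of the argument, and the main obstacle I anticipate, is combining these identities over varying $i$ to extract the vanishing. Summing or averaging over a finite $F \subset I$ and expanding $x_n$ as an $L^2$-element in reduced words, the trace $\tau(x_n^* y_2^* x_n y_1)$ splits into pairings of reduced words that, by $L^2$-orthogonality in the amalgamated free product, contribute only when their structures match exactly; the PP-basis orthogonality $E_B(u_j^* u_i) = \delta_{ij}$ then forces these surviving pairings to collapse into a quantity controlled by $\max_{i \in F} \|[u_i, x_n]\|_2$, which vanishes along $\omega$. The delicate point is balancing the dual role of $\{u_i\}$—as a Parseval basis controlling the growth of the sums, and as approximately commuting unitaries producing the cancellation—while tracking the reduced-word combinatorics in the expanded trace. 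The Pimsner-Popa basis being indexed by an infinite set $I$ is essential here: finitely many unitaries could not provide a commutation strong enough to force the entire trace to zero.
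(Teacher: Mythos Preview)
Your outline correctly identifies the three ingredients---reduction to reduced words, asymptotic commutation of $x$ with $N_1$, and the orthogonality coming from the Pimsner--Popa basis---but the step you flag as ``the heart of the argument'' is genuinely missing, and the route you sketch does not obviously close. After your commutation identity you have, for every $i$ in a finite $F$, the \emph{same} number $\tau(x_n^* y_2^* x_n y_1)$ up to an error; averaging a constant over $F$ gives nothing. To extract decay you would need a Bessel-type bound such as $\sum_{i\in F}|\langle x_n u_i y_1,\,u_i y_2 x_n\rangle|^2\le C$ with $C$ independent of $|F|$, and for that you would need the vectors $u_i y_2 x_n$ (or $x_n u_i y_1$) to be orthogonal for distinct $i$. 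Your peeling only arranges that $y_1$ \emph{starts} and $y_2$ \emph{ends} in $N_2\ominus B$; but the $u_i$ you insert lands at the \emph{start} of $y_2$, so the relevant orthogonality would require control of the start of $y_2$, which you have not arranged and cannot arrange simultaneously by peeling (the two peelings are coupled through the trace). In short, the PP-basis orthogonality is being applied to the wrong slot, and the combinatorial cancellation you appeal to is not there.

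The paper closes this gap by a different mechanism: instead of manipulating the trace, it decomposes $L^2(M)=X_1\oplus X_0$, where $X_1$ is the closed span of reduced words whose first and last $N_1$-letters $c,d$ already satisfy $E_B(b_{2l+1}c)=E_B(b_1^*c)=0$ and $E_B(d a_1)=0$ (so that $\langle (x_n)_{X_1}y_1,\,y_2(x_n)_{X_1}\rangle=0$ holds for free), and then shows $\|(x_n)_{X_0}\|_2\to 0$. The PP-basis enters not through a Parseval identity on $E_{N_1}$ but through the structural fact that the conjugates $u_k X_0 u_k^*$ are \emph{mutually orthogonal} subspaces of $L^2(M)$. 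Combined with $u_k x_n u_k^*\approx x_n$, this yields
\[
m\,\|(x_n)_{X_0}\|_2^2\ \le\ 2\sum_{k=1}^m\|u_kx_nu_k^*-x_n\|_2^2\ +\ 2\|x_n\|_2^2,
\]
so letting $n\to\omega$ and then $m\to\infty$ gives the vanishing. This orthogonality-of-conjugated-subspaces argument is the idea you are missing; once you see it, the peeling step becomes unnecessary.
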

\begin{proof}
First, we note that if we let $(x_n)_n$ be a sequence representing $x\in M^\omega$, it suffices to show that for every $\varepsilon>0$ there is an integer $N$ such that $\left\langle x_ny_1, y_2 x_n\right\rangle < \varepsilon$ for every $n\geq N$.\\
Secondly, by Kaplansky density, it suffices to consider the case when $y_1$ and $y_2$ are monomials.  We write $y_1 = a_1 \cdots a_{2k+1}$ and $y_2 = b_1\cdots b_{2l+1}$, for $a_1, a_{2k+1}, b_1, b_{2l+1}\in (N_1\ominus B)\cup\{1\}$, $a_{2i}, b_{2j}\in N_2\ominus B$ for all $1\leq i\leq k$, $1\leq j\leq l$, and $a_{2i+1}, b_{2j+1}\in N_1\ominus B$ for all $1\leq i \leq k-1$, $1\leq j \leq l-1$.\\

We define $X_1$ to be the closure in $L^2(M)$ of the subspace consisting of sums of reduced words in $M$ that contain at least one letter from $N_2\ominus B$ and that begin with some letter $c\in (N_1\ominus B)\cup\{1\}$ and end with a letter $d\in (N_1\ominus B)\cup\{1\}$ such that $E_B(b_{2l+1}c) = E_B(b_1^*c) = 0$ if $c\neq1$,  and $E_B(da_1) = 0$ if $d\neq 1$.\\

We define $X_0 = L^2(M)\ominus X_1$. Let $(x_n)_{X}$ denote the projection of $x_n$ onto a subspace $X$.  Then we decompose $x_n = (x_n)_{X_0} + (x_n)_{X_1}$ for every $n$.  We claim that $(x_n)_{X_1}$ satisfies the mutual orthogonality with $y_1$ and $y_2$, and we claim that $\lim_{n\rightarrow \omega}\left\|(x_n)_{X_0}\right\|_2=0$.\\

We use the Pimsner-Popa basis of unitaries $\{u_i\}_{i\in I}$ to establish the latter claim.  
Thus, we have that $\left\langle u_i X_0 u_i^*, u_j X_0 u_j^* \right\rangle = 0$ whenever $i\neq j$, and hence, for each $k$, we have
\begin{align*}
\left\| (x_n)_{X_0} \right\|_2^2 &= \left\| u_k (x_n)_{X_0} u_k^*\right\|_2^2\\
&\leq 2\left\| u_k (x_n)_{X_0} u_k^* - (x_n)_{u_k X_0 u_k^*}\right\|_2^2 + 2\left\| (x_n)_{u_k X_0 u_k^*}\right\|_2^2\\
&\leq 2\left\| u_kx_nu_k^* - x_n\right\|_2^2 + 2\left\| (x_n)_{u_k X_0 u_k^*}\right\|_2^2.
\end{align*}
By summing over $k$, we have that 
\[
\sum_{k=1}^{m} \left\| (x_n)_{X_0}\right\|_2^2 \leq 2\sum_{k=1}^{m}\left\|u_k x_n u_k^* - x_n \right\|_2^2 + 2\sum_{k=1}^{m} \left\| (x_n)_{u_k X_0 u_k^*}\right\|_2^2.
\]
But by orthogonality of the Pimsner-Popa basis, 
\begin{align*}
\sum_{k=1}^{m} \left\| (x_n)_{u_k X_0 u_k^*}\right\|_2^2 &\leq \left\| (x_n)_{u_1X_0u_1^*\cup u_2X_0u_2^*\cup\cdots \cup u_{m}X_0 u_{m}^*}\right\|_2^2\\
&\leq  \left\| x_n \right\|_2^2.
\end{align*}
Thus, we have that 
\[
m \left\|(x_n)_{X_0}\right\|_2^2 \leq  2\sum_{k=1}^{m}\left\|u_k x_n u_k^* - x_n \right\|_2^2 + 2\left\| x_n \right\|_2^2,
\]
so that
\[
\left\|(x_n)_{X_0}\right\|_2^2 \leq \frac{2}{m} \sum_{k=1}^{m}\left\|u_k x_n u_k^* - x_n \right\|_2^2 + \frac{2}{m} \left\| x_n \right\|_2^2
\]
Since $x\in N_1^\prime \cap (M^\omega \ominus N_1^\omega)$, we have that 
\[
\lim_{n\rightarrow \omega} \left\|(x_n)_{X_0}\right\|_2^2 \leq \frac{2}{m} \lim_{n\rightarrow \omega} \left\|x_n\right\|_2^2.
\]
As this is true for every $m>0$, we have that $\lim_{n\rightarrow\omega} \left\| (x_n)_{X_0}\right\|_2 = 0$.\\

Thus, it suffices to show that $(x_n)_{X_1}$ satisfies the mutual orthogonality property. 
Namely, $\left\langle (x_n)_{X_1}y_1, y_2 (x_n)_{X_1}\right\rangle =  \tau((x_n)_{X_1}^*y_2^*(x_n)_{X_1}y_1)$.\\
But since $(x_n)_{X_1} \in X_1$, we have that 
\[
E_B( (x_n)_{X_1}^*b_{2l+1}^*)=0\text{, }E_B(b_1^* (x_n)_{X_1}) = 0\text{, and } E_B((x_n)_{X_1} a_1) = 0.
\]
Hence, $\left\langle (x_n)_{X_1}y_1, y_2 (x_n)_{X_1}\right\rangle = 0$, so $\lim_{n\rightarrow\omega} \left\langle x_n y_1, y_2 x_n\right\rangle = 0$.

This completes the proof.
\end{proof}

As we have established asymptotic orthogonality, as in Popa's result, we use weak mixingness to get maximal amenability.  To be precise, we use the following generalization of Popa's result given by Houdayer:
\begin{theorem}\label{HAOP}(Theorem 8.1 in \cite{Houd2}) Suppose $P\subset N \subset M$ are tracial von Neumann algebras.  Assume the following:
\begin{enumerate}
\item $P$ is amenable.
\item $N \subset M$ is weakly mixing through $P$.
\item $N \subset M$ has the asymptotic orthogonality property relative to $P$.
\end{enumerate}
Then for any intermediate amenable von Neumann subalgebra $P\subset Q \subset M$, we have that $Q\subset N$.
\end{theorem}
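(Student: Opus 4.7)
The plan is to adapt Popa's 1983 template for maximal amenability to the relative setting developed by Houdayer. We argue by contradiction: assume $Q$ is an intermediate amenable subalgebra with $P\subset Q\subset M$ and $Q\not\subset N$, and derive a contradiction from hypotheses (2) and (3) together with amenability of $Q$.

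First, pick $q\in Q$ with $E_N(q)\neq q$ and set $y:=q-E_N(q)\in M\ominus N$; then $y\neq 0$, and WLOG $\|q\|_\infty\leq 1$. Second, by amenability of $Q$ (Connes' theorem), $Q$ is hyperfinite, so $Q'\cap Q^\omega$ is a diffuse von Neumann algebra; select a Haar unitary $v\in Q'\cap Q^\omega$. Since $P\subset Q$, we have $v\in P'\cap M^\omega$, and decomposing $v=E_{N^\omega}(v)+v_\perp$ places $v_\perp\in P'\cap(M^\omega\ominus N^\omega)$.

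Next, apply the strengthened form of the relative AOP (hypothesis (3))---derived from the basic orthogonality $x_\perp y_1\perp y_2 x_\perp$ by expanding $\|y_1x-xy_2\|_2^2$ along the decomposition $x=E_{N^\omega}(x)+x_\perp$ and using the AOP to cancel the relevant cross term---to the $P$-central element $v$ and $y_1=y_2=y\in M\ominus N$:
\[
\|[v,y]\|_2^2\;\geq\;\|yv_\perp\|_2^2+\|v_\perp y\|_2^2.
\]
Since $v$ commutes with $q$ and $q=E_N(q)+y$, we obtain $[v,y]=-[v,E_N(q)]$, giving
\[
\|[v,E_N(q)]\|_2^2\;\geq\;\|yv_\perp\|_2^2+\|v_\perp y\|_2^2.
\]
The crucial remaining task is to show the left-hand side must vanish, forcing $v_\perp y=yv_\perp=0$. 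Here we invoke weak mixing: the sequence $(u_n)\subset\mathcal{U}(P)$ from hypothesis (2) commutes with $v$ (since $u_n\in P\subset Q$ and $v\in Q'$) and satisfies $\|E_N(au_nb)\|_2\to 0$ for $a,b\in M\ominus N$. Replacing $q$ by $q_n:=u_nqu_n^*\in Q$ (still commuting with $v$, with $y_n:=q_n-E_N(q_n)=u_nyu_n^*$), applying the above inequality for each $n$, and taking the ultrafilter limit while using the commutation $u_nv_\perp=v_\perp u_n$ together with the mixing decay of $E_N(y^*u_ny)$, one concludes $\|yv_\perp\|_2=\|v_\perp y\|_2=0$.

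Having $v_\perp y=0=yv_\perp$ and varying $q\in Q$ so that the resulting $y$'s generate (via the $M$-$M$-bimodule action) a large subspace of $L^2(M)\ominus L^2(N)$, we conclude $v_\perp=0$, i.e., $v\in N^\omega$. Since this holds for every Haar unitary $v\in Q'\cap Q^\omega$, and since these generate $Q'\cap Q^\omega$ in the amenable case, a final standard argument yields $Q\subset N$, contradicting our assumption.

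The main obstacle is the weak-mixing step: extracting the vanishing $\|yv_\perp\|_2=\|v_\perp y\|_2=0$ from the \emph{lower} bound provided by AOP. The subtlety is that the AOP inequality does not directly bound $\|[v,E_N(q)]\|_2$ from above; one must separately produce a matching upper bound that vanishes along the ultrafilter, and this is precisely where the weak-mixing hypothesis and the commutation $[v,u_n]=0$ must be combined delicately. A secondary technical point is verifying the strengthened form of relative AOP for $x\in P'\cap M^\omega$ (rather than the narrower $x\in N'\cap M^\omega$ case in Popa's original), which requires a careful variant of Popa's cross-term cancellation.
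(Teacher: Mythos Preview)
The paper does not prove this statement: it is quoted as Theorem~8.1 of \cite{Houd2} and used as a black box in the proof of Theorem~1.1. There is no proof in the present paper to compare your attempt against.

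Assessed on its own, your sketch breaks at precisely the step you flag as ``the main obstacle.'' You propose to force $\|[v,E_N(q)]\|_2$ to zero by conjugating $q$ to $q_n=u_nqu_n^*$ and appealing to the mixing decay of $E_N(y^*u_ny)$. But $u_n\in P\subset Q\cap N$, so $u_n$ commutes both with $v\in Q'\cap Q^\omega$ and with the conditional expectation $E_N$; hence
\[
[v,E_N(q_n)]=[v,\,u_nE_N(q)u_n^*]=u_n\,[v,E_N(q)]\,u_n^*,
\]
and $\|[v,E_N(q_n)]\|_2=\|[v,E_N(q)]\|_2$ for every $n$. The same invariance holds on the right: $\|y_nv_\perp\|_2=\|u_nyu_n^*v_\perp\|_2=\|u_nyv_\perp u_n^*\|_2=\|yv_\perp\|_2$. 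You therefore obtain the identical inequality for each $n$, nothing decays, and the quantity $E_N(y^*u_ny)$ never appears in the computation. Weak mixing simply cannot be exploited through this conjugation; in Houdayer's proof it enters at a different stage and for a different purpose.

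There are two further soft spots. The ``strengthened AOP inequality'' $\|y_1x-xy_2\|_2^2\geq\|y_1x_\perp\|_2^2+\|x_\perp y_2\|_2^2$ does not follow from $\langle x_\perp y_1,y_2x_\perp\rangle=0$ alone: after writing $y_1x-xy_2=(y_1E-Ey_2)+(y_1x_\perp-x_\perp y_2)$ with $E=E_{N^\omega}(x)$, the cross term $2\,\mathrm{Re}\langle y_1E-Ey_2,\,y_1x_\perp-x_\perp y_2\rangle$ need not vanish and can be negative. And the closing deductions---from $v_\perp y=yv_\perp=0$ for the limited supply of $y\in Q\ominus N$ to $v_\perp=0$, and then from $Q'\cap Q^\omega\subset N^\omega$ to $Q\subset N$---are each nontrivial and not ``standard'' in the way you suggest.
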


Now our theorem follows easily, as we have established the weak mixing condition and the relative asymptotic orthogonality.
\begin{thma} Suppose that $(N_1, \tau_1)$ and $(N_2, \tau_2)$ are finite von Neumann algebras with a common von Neumann subalgebra $(B, \tau_B)$, where $N_1$ has a Pimsner-Popa basis of unitaries over $B$.  Suppose that $N_1$ is diffuse and amenable and that $N_1\not\prec_{N_1} B$.  Let $M=N_1\ast_B N_2$.  Then $N_1$ is maximal amenable in $M$.
\end{thma}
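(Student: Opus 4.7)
The plan is to deduce the theorem by combining the two main ingredients already in hand, namely the relative asymptotic orthogonality of Theorem \ref{relAOP} and the weak mixing criterion of Lemma \ref{mix}, and feeding them into Houdayer's criterion (Theorem \ref{HAOP}) applied with $P = N = N_1$.

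First I would set $P := N_1$ and verify, one by one, the three hypotheses of Theorem \ref{HAOP} for the inclusions $N_1 \subset N_1 \subset M$. Amenability of $P$ is given in the statement. Weak mixing of $N_1 \subset M$ through $N_1$ itself is precisely Lemma \ref{mix} applied with $P = N_1$, whose single hypothesis $P \not\prec_{N_1} B$ is exactly the assumption $N_1 \not\prec_{N_1} B$. The relative asymptotic orthogonality property for $N_1 \subset M$ through $P = N_1$ is the content of Theorem \ref{relAOP}, which requires only that $B \subset N_1$ admit a Pimsner--Popa basis of unitaries, again part of the hypotheses.

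Having verified all three hypotheses, Theorem \ref{HAOP} applies and yields: for every amenable intermediate von Neumann subalgebra $N_1 \subset Q \subset M$, one has $Q \subset N_1$, hence $Q = N_1$. By definition, this is exactly the statement that $N_1$ is maximal amenable in $M$; note that $N_1$ is itself amenable by hypothesis, so it qualifies as a candidate in the maximality condition.

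There is no genuine obstacle at this stage, since all of the work has been pushed into the preceding two results; the only point requiring a small remark is the legitimacy of taking $P = N$ in Theorem \ref{HAOP}, which is harmless because its conclusion is stated for an arbitrary intermediate amenable subalgebra $P \subset Q \subset M$, and here the lower inclusion $P \subset N$ is trivially the equality $N_1 = N_1$. If one wished to be more explicit, one could restate the argument directly: given any amenable $Q$ with $N_1 \subset Q \subset M$, one applies Houdayer's theorem to the triple $N_1 \subset N_1 \subset M$ to obtain $Q \subset N_1$, and hence $Q = N_1$, establishing the maximality.
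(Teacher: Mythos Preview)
Your proposal is correct and follows essentially the same approach as the paper: apply Lemma~\ref{mix} (with $P=N_1$) and Theorem~\ref{relAOP} to verify the hypotheses of Theorem~\ref{HAOP}, and conclude that any amenable intermediate $N_1\subset Q\subset M$ satisfies $Q=N_1$. The only cosmetic difference is that the paper phrases the last step as a proof by contradiction, whereas you state the maximality directly.
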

\begin{proof} Assume for sake of contradiction that there exists an intermediate amenable von Neumann algebra, $N_1\subsetneq Q\subset M$.  Then by Proposition \ref{mix}, $N_1\subset M$ is weakly mixing, and by Proposition \ref{relAOP}, $N_1\subset M$ has the asymptotic orthogonality property.  Hence, by Theorem \ref{HAOP}, we have that $Q\subset N_1$.  Thus, $Q=N_1$, a contradiction.
\end{proof}

\section{Examples}

\subsection{Group von Neumann algebra case} Consider the case where $\Lambda_1$ is a discrete countable amenable group and $H$ is a subgroup of $\Lambda_1$ with $[\Lambda_1 : H] = \infty$.  Note that the group unitaries form a Pimsner-Popa basis in this case. Let $\Lambda_2$ be another group containing $H$.  Define $\Gamma$ to be the amalgamated free product group $\Gamma = \Lambda_1 \ast_H \Lambda_2$.  Then $L(\Lambda_1)$ is a maximal amenable von Neumann subalgebra of $L(\Gamma) = L(\Lambda_1)\ast_{L(H)}L(\Lambda_2)$.  Similarly, if $\Lambda_1\curvearrowright X$ is a free, ergodic, probability measure preserving action on a probability space $X$, then if we take $N_1 = L^\infty(X)\rtimes \Lambda_1$ and $B = L^\infty(X)\rtimes H$, we also get that $N_1$ is maximal amenable in $N_1\ast_B N_2$ for any $N_2$ contained $B$.  This had been previously proven by Jolissaint in \cite{Jo} in the case where $H$ was assumed to be finite.  The infinite index case was proven independently by Boutonnet and Carderi in \cite{BC2} as a corollary of their more general result.  

\subsection{Abelian case} For the abelian case, we have $B = L^\infty(Y, \nu) \subset L^\infty(X, \mu) = N_1$.  This inclusion induces a surjective measure-preserving map $\phi: X\rightarrow Y$.  We can disintegrate the measure space as a direct integral, 
\[
(X, \mu) = \int_Y^{\oplus} (\phi^{-1}(\{ y\}), \mu_y) \text{d}\nu(y),
\]
where $\mu_y$ is a probability measure on $\phi^{-1}(\{y\})$ such that:
\begin{itemize}
\item For every measurable $E\subset X$, $y\mapsto \mu_y(E\cap \phi^{-1}(\{y\}))$ is measurable
\item $\mu(E) = \int_Y \mu_y(E\cap \phi^{-1}(\{y\}))\text{d}\nu(y).$
\end{itemize}
Then the inclusion having the desired intertwining by bimodules condition is equivalent to the following measure theoretic condition.
\begin{proposition} $[L^\infty(X, \mu)p : L^\infty(Y, \nu)p] =\infty$ for all $p\in L^\infty(Y,\nu)^\prime\cap L^\infty(X,\mu)$ if and only if $\mu_y$ is a diffuse probability measure for a.e. $y\in Y$.
\end{proposition}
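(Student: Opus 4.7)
The plan is to translate the intertwining-by-bimodules condition for $M := L^\infty(X,\mu)$ and $N := L^\infty(Y,\nu)$ into a fiberwise statement via the disintegration, and then read off the diffuseness criterion. First, since $N$ sits as a maximal abelian subalgebra of $M$, we have $N' \cap M = M$, and the previous lemma together with Popa's theorem identifies the condition ``$[pMp : Np] = \infty$ for every non-zero projection $p \in N' \cap M$'' with the absence of non-zero finite-trace projections in $M' \cap \langle M, e_N \rangle$. So the proposition reduces to determining which characteristic functions $\chi_A \in M$ sit inside $M' \cap \langle M, e_N \rangle$ with finite trace.

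Next I would realize the basic construction concretely. Using $L^2(X,\mu) = \int_Y^\oplus L^2(\phi^{-1}(\{y\}), \mu_y)\, d\nu(y)$, the Jones projection $e_N$ becomes the fiberwise rank-one projection onto the constants, and $\langle M, e_N \rangle = JN'J$ is identified with the algebra of decomposable operators
\[
\int_Y^\oplus B(L^2(\phi^{-1}(\{y\}), \mu_y))\, d\nu(y),
\]
equipped with the semifinite trace $\text{Tr}(T) = \int_Y \text{Tr}_y(T_y)\, d\nu(y)$. Since multiplication operators form a maximal abelian subalgebra in each fiber, one concludes $M' \cap \langle M, e_N \rangle = M$. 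Thus the proposition amounts to the assertion that a non-zero measurable $A \subset X$ with $\int_Y \text{Tr}_y(\chi_{A_y})\, d\nu(y) < \infty$, where $A_y = A \cap \phi^{-1}(\{y\})$, exists if and only if $\mu_y$ fails to be diffuse on a set of positive $\nu$-measure.

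The final step is a fiberwise computation combined with measurable selection. One checks that $\text{Tr}_y(\chi_{A_y})$ equals the number of $\mu_y$-atoms inside $A_y$ when $\mu_y|_{A_y}$ is purely atomic, and equals $+\infty$ as soon as $\mu_y$ has a diffuse component meeting $A_y$. If $\mu_y$ is diffuse for $\nu$-almost every $y$, then $\text{Tr}(\chi_A) < \infty$ forces $\mu_y(A_y) = 0$ for a.e.\ $y$, hence $\mu(A) = 0$ and $\chi_A = 0$. Conversely, if on a set $E$ of positive $\nu$-measure the fiber $\mu_y$ admits at least one atom, I would invoke the Kuratowski--Ryll-Nardzewski measurable selection theorem to measurably choose an atom $s(y) \in \phi^{-1}(\{y\})$ of $\mu_y$ for each $y \in E$, and then take $A := \{s(y) : y \in E\}$ to produce a non-zero projection $\chi_A$ of trace at most $\nu(E) < \infty$. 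The main technical point I anticipate is verifying measurability of the set $E$ and of the section $s$; this amounts to checking that the atomic part of the disintegration is itself measurable in $y$, which is standard but requires some care, typically handled by inductively extracting the largest atom of $\mu_y$ at each stage.
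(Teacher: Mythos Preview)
Your approach is correct but genuinely different from the paper's. The paper argues both directions directly from the definition of the Pimsner--Popa index via the conditional expectation. For one direction it picks a measurable section $\psi$ of atoms over the set $E=\{y:\mu_y\text{ has an atom}\}$ and shows that for $p=\chi_{\psi(E)}$ one actually has $L^\infty(Y)p=L^\infty(X)p$, so $[pMp:Np]=1$. For the other direction it uses the formula $\mathcal{E}(\chi_F)(y)=\mu_y(F\cap\phi^{-1}(\{y\}))$ together with the index characterization $[M:N]^{-1}=\inf\{\mathcal{E}(p)(\phi(x)):p(x)\neq 0\}$ to see that finite index on a corner forces a uniform lower bound on nonempty fiber measures, hence atoms. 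Your route instead passes through Lemma~2.8 and Theorem~2.5 to recast the index condition as the absence of nonzero finite-trace projections in $M'\cap\langle M,e_N\rangle$, identifies this commutant with $M$ itself, and then computes $\mathrm{Tr}(\chi_A)=\int_Y\dim L^2(A_y,\mu_y)\,d\nu(y)$ fiberwise. The measurable-selection step is common to both arguments. The paper's computation is more self-contained and elementary, while yours situates the result cleanly inside the intertwining framework and handles the two directions by a single trace computation.

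Two small remarks. First, your sentence ``since $N$ sits as a maximal abelian subalgebra of $M$, we have $N'\cap M=M$'' has the right conclusion but the wrong justification: a masa would give $N'\cap M=N$; the correct reason is simply that $M$ is abelian. Second, Lemma~2.8 as stated assumes $M$ is diffuse; you should note that the equivalence $(1)\Leftrightarrow(2)$ you actually use does not require this hypothesis (its proof only invokes Theorem~2.5(3) and the Pimsner--Popa lemma), or alternatively observe that when $\mu_y$ is diffuse a.e.\ the measure $\mu$ is automatically diffuse.
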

\begin{proof}
Suppose a positive measure set in $Y$ has fibers with atoms.  Then we can define the set $E = \{y: \mu_y \text{ has an atom}\}\subset Y$.  Then $E$ is measurable and $\phi: \phi^{-1}(E)\rightarrow E$ is surjective, so by measurable selection, there exists a map $\psi: E\rightarrow \phi^{-1}(E)$ such that $\phi\circ \psi = \text{id}_E$ and $\psi(y)$ is an atom of $\mu_y$ for every $y\in F$.  Let $p=\chi_{\psi(E)}$.  We claim that $[L^\infty(X, \mu)p : L^\infty(Y, \nu)p] = 1$.  To see this, let $f\in L^\infty(X, \mu)$.  Note that $f(x)\chi_{\psi(E)}(x) \neq 0$ implies that there exists $y\in E$ such that $\psi(y) = x$, so $x\in \phi^{-1}(\{y\})$ is an atom of $\mu_y$.  Thus, $f(x)\chi_{\psi(E)}(x) = f(\psi(y))\chi_E(y)$.\\
We define $g:E\rightarrow \mathbb{C}$ by $g(y) = f(\psi(y))$.  Then for every $x\in \psi(E)$, we have that $x=\psi(y)$ for some $y$, so 
\[
g(\phi(x)) = g(y) = f(\psi(y)) = f(x).
\]
Thus, $(g\circ \phi)\chi_{\psi(E)} = f\chi_{\psi(E)}$, so $L^\infty(Y, \nu)p = L^\infty(X, \mu)p$.\\
\\
For the other direction, first assume that $[L^\infty(X, \mu): L^\infty(Y, \nu)] <\infty$.  As noted in the proof of Prop 1.2 in \cite{P3}, we have that 
\[
[L^\infty(X, \mu): L^\infty(Y, \nu)]^{-1} = \inf\{ \mathcal{E}(p)(\phi(x)) : x\in X, p\in L^\infty(X, \mu) \text{ a projection, } p(x)\neq 0\},
\]
where $\mathcal{E} : L^\infty(X, \mu) \rightarrow L^\infty(Y, \nu)$ is the conditional expectation.
Thus, there exists $\varepsilon>0$ such that $\mathcal{E}(p)(y) \geq \varepsilon$ for every $y\in Y$ and $p\in L^\infty(X, \mu)$ such that $p(\phi^{-1}(\{y\}))\neq 0$.  Using the disintegration of measure, we have that for every $f\in L^\infty(X, \mu)$, 
\[
\int_X f d\mu = \int_Y \left( \int_{\phi^{-1}(\{y\})}f\>d\mu_y\right) d\nu(y).
\]
Hence, $\mathcal{E}$ is defined by $\mathcal{E}(f)(y) = \int_{\phi^{-1}(\{y\})} f d\mu_y$.  Therefore, if $E\subset X$ is any measurable subset, we have that $\mathcal{E}(\chi_E)(y) = \mu_y(E\cap \phi^{-1}(\{y\}))$.\\
Therefore, for every $y\in Y$ and for every measurable $E\subset X$ such that $E\cap \phi^{-1}(\{y\}) \neq \emptyset$, we have that $\mu_y(E\cap \phi^{-1}(\{y\})) \geq \varepsilon$.  Hence, $\mu_y$ has atoms for a.e. $y$.\\
The argument remains the same if we assume instead that $[L^\infty(X, \mu)p : L^\infty(Y,\nu)p]<\infty$ for some $p\in L^\infty(Y,\nu)^\prime \cap L^\infty(X, \mu)$, so the proof is complete.\\
\end{proof}


\begin{thebibliography}{99}
\bibitem{BC} R. Boutonnet and A. Carderi, \emph{Maximal amenable subalgebras of von Neumann algebras associated with hyperbolic groups}, Math. Ann. 367 (2017), no. 3-4, 1199-1216.
\bibitem{BC2} R. Boutonnet and A. Carderi, \emph{Maximal amenable von Neumann subalgebras arising from maximal amenable subgroups}, Geom. Funct. Anal. 25 (2015), no. 6, 1688-1705.
\bibitem{BH1} R. Boutonnet and C. Houdayer, \emph{Structure of modular invariant subalgebras in free Araki-Woods factors}, Anal. PDE 9 (2016), no. 8, 1989-1998.
\bibitem{BH2} R. Boutonnet and C. Houdayer, \emph{Amenable absorption in amalgamated free product von Neumann algebras}, Kyoto J. Math. 58 (2018), no. 3, 583-593.
\bibitem{Br} A. Brothier, \emph{The cup subalgebra of a $\text{II}_1$ factor given by a subfactor planar algebra is maximal amenable}, Pacific J. Math. 269(1) (2014), 19-29.
\bibitem{CFRW} J. Cameron, J. Fang, M. Ravichandran, and S. White, \emph{The radial masa in a free group factor is maximal injective}, J. Lond. Math. Soc. 82 (2010), 787-809.
\bibitem{CS} T. Ceccherini-Silberstein, \emph{On subfactors with unitary orthonormal bases}, Sovrem. Mat. Prilozh. No. 22, Algebra i Geom. (2004), 102-125; translation in J. Math. Sci. (N.Y.) 137 (2006) no. 5, 5137-5160.
\bibitem{CIK} I. Chifan, A. Ioana, Y. Kida, \emph{W*-superrigidity for arbitrary actions of central quotients of braid groups}, Math. Ann. (2013) 1-20.
\bibitem{C} A. Connes, \emph{Classification of injective factors. Cases $\text{II}_1$, $\text{II}_\infty$, $\text{III}_\lambda$, $\lambda\neq 1$}, Ann. Math. (2), 104(1) (1976), 73-115.
\bibitem{F} J. Fang, \emph{On maximal injective subalgebras of tensor products of von Neumann algebras}, J. Funct. Anal. 244 (2007) 277-288.
\bibitem{Gao} M. Gao, \emph{On maximal injective subalgebras}, Proc. Amer. Math. Soc. 138 (2010), 2065-2070.
\bibitem{Ge} L. Ge, \emph{On maximal injective subalgebras of factors}, Ad. Math. 118 (1996), 34-70.
\bibitem{Hou} C. Hou, \emph{On maximal injective subalgebras in a $w\Gamma$ factor}, Sci. China Ser. A 51 (2008), 2089-2096.
\bibitem{Houd1} C. Houdayer, \emph{A class of $\text{II}_1$ factors with an exotic abelian maximal amenable subalgebra}, Trans. Amer. Math. Soc. 366(7) (2014), 3693-3707.
\bibitem{Houd2} C. Houdayer, \emph{Structure of $\text{II}_1$ factors arising from free Bogoljubov actions of arbitrary groups}, Adv. Math. 260 (2014), 414-457.
\bibitem{Houd3} C. Houdayer, \emph{Gamma stability in free product von Neumann algebras}, Comm. Math. Phys. (2014), 1-21.
\bibitem{HR} C. Houdayer and S. Raum, \emph{Asymptotic structure of free Araki-Woods factors}, Math. Ann. 363 (2015), no. 1-2, 237-267.
\bibitem{HU} C. Houdayer and Y. Ueda, \emph{Asymptotic structure of free product von Neumann algebras}, Math. Proc. Cambridge Philos. Soc. 161 (2016), no. 3, 489-516.
\bibitem{I1} A. Ioana, \emph{Uniqueness of the group measure space decomposition for Popa's $\mathcal{HT}$ factors}, Geom. Funct. Anal. 22 (2012) 699-732.
\bibitem{IPP} A. Ioana, J. Peterson, and S. Popa, \emph{Amalgamated free products of $w$-rigid factors and calculation of their symmetry groups}, Acta Math. 200 (2008), 85-153.
\bibitem{Jo} P. Jolissaint, \emph{Maximal injective and mixing masas in group factors}, arXiv:1004.0128
\bibitem{JS} V. Jones and V. S. Sunder, \emph{Introduction to Subfactors}, volume 234 of \emph{London Mathematical Society Lecture Note Series}, Cambridge University Press, Cambridge, 1997.
\bibitem{Kad} R. Kadison, \emph{Problems on von Neumann algebras}, Notes of Baton Rouge Conference, unpublished (1967).
\bibitem{PiPo1} M. Pimsner and S. Popa, \emph{Entropy and index for subfactors}, Ann. Sci. Ecole Norm. Sup. 19 (1986), 57-106.
\bibitem{OP} N. Ozawa and S. Popa, \emph{On a class of $\text{II}_1$ factors with at most one cartan subalgebra}, Ann. of Math. (2), 172 (2010), 713-749.
\bibitem{P1} S. Popa, \emph{Maximal injective subalgebras in factors associated with free groups}, Adv. Math. 50. (1983) 27-48
\bibitem{P4} S. Popa, \emph{Orthogonal pairs of *-subalgebras  in finite von Neumann algebras}, J. Operator Theory 9 (1983), 253-268.
\bibitem{P2} S. Popa, \emph{Markov traces on universal Jones algebras and subfactors of finite index}, Invent. Math., 111 (1993), 375-405.
\bibitem{P3} S. Popa, \emph{The relative Dixmier property for inclusions of von Neumann algebras of finite index}, Ann. Sci. Ecole. Norm. Sup. 32 (1999), 743-767.
\bibitem{P5} S. Popa, \emph{Strong rigidity of $\text{II}_1$ factors arising from malleable actions of $w$-rigid groups, I}, Invent. Math., 165 (2006), 369-408.
\bibitem{P6} S. Popa, \emph{On a class of type $\text{II}_1$ factors with Betti numbers invariants}, Ann. of Math. 163 (2006), 809-899.
\bibitem{PV} S. Popa and S. Vaes, \emph{Strong rigidity of generalized Bernoulli actions and computations of their symmetry groups}, Adv. Math. 217 (2008), 833-872.
\bibitem{Shen} J. Shen, \emph{Maximal injective subalgebras of tensor products of free group factors}, J. Funct. Anal. 240 (2006), 334-348.
\bibitem{SZ} S. Str\u{a}tila and L. Zsid\'{o}, \emph{The commutation theorem for tensor products over von Neumann algebras}, J. Funct. Anal. 165 (1999), 293-346.
\bibitem{V} D. Voiculescu, \emph{Symmetries of some reduced free product $C^*$-algebras}, in ``Operator Algebras and their Connections with Topology and Ergodic Theory" (Busteni, 1983), Lecture Notes in Math., 1132, 556-588. Springer, Berlin-Heidelberg, 1985.
\end{thebibliography}
\end{document}